\documentclass[11pt,letterpaper]{amsart}
\usepackage[utf8]{inputenc}
\usepackage{amsmath,amssymb,graphicx,setspace,verbatim,url}
\usepackage{color}
\usepackage{hyperref}
\usepackage{multicol}
\usepackage{enumitem}
\usepackage{float}
\usepackage{soul}
\usepackage{tikz}
\usepackage[normalem]{ulem}

\usepackage{xcolor}
\usetikzlibrary{matrix}
\usepackage[notcite,notref,color,]{showkeys}
\usepackage{xargs}

\usepackage{tikz-cd}
\usetikzlibrary{arrows.meta}
\newenvironment{red}{\relax\color{red}}{\hspace*{.5ex}\relax}
\newcommand{\bcp}{\begin{red} }
\newcommand{\ecp}{\end{red}}

\usepackage{comment}

\tikzcdset{
  arrow style={tikz,diagrams={>=Triangle}}
}

\input xy
\xyoption{all}

\newtheorem{prop}{Proposition}[section]

\newtheorem{thm}[prop]{Theorem}

\DeclareMathOperator{\Cor}{Cor}

\newcommand{\SSym}{Sym}
\newcommand{\Aut}{\mathrm{Aut}}

\usepackage[loadshadowlibrary,shadow, colorinlistoftodos,textsize=tiny,obeyFinal]{todonotes}

\setuptodonotes{fancyline, backgroundcolor=gray!10,bordercolor=gray}

\newcommandx{\inline}[2][1=]{\todo[inline, #1]{#2}}
\newcounter{hw}
\newcommandx{\homework}[2][1=]{\todo[inline, caption={Homework \thehw} #1]{\stepcounter{hw} #2}}

\newcommandx{\claudioLong}[2][1=Long todo, usedefault]{\todo[inline, color=red!25,caption={\textbf{Claudio:} #1}]{\textbf{Claudio: #1}. #2}}
\newcommandx{\philippeLong}[2][1=Long todo, usedefault]{\todo[inline,color=purple!25, caption={\textbf{Philippe:} #1}]{\textbf{Philippe: #1}. #2} }

\begin{document}
\title{Hypertopes with prescribed diagram symmetries}

\author[Dimitri Leemans]{Dimitri Leemans}
\address[Dimitri Leemans]{Université Libre de Bruxelles, 1050 Brussels, Belgium}
\email{leemans.dimitri@ulb.be}

\author[Pablo Spiga]{Pablo Spiga}
\address[Pablo Spiga]{Dipartimento di Matematica e Applicazioni,
Università di Milano-Bicocca, via Cozzi 55,
20126 Milan,
Italy}
\email{pablo.spiga@unimib.it}

\author[Philippe Tranchida]{Philippe Tranchida}
\address[Philippe Tranchida]{Université Libre de Bruxelles, 1050 Brussels, Belgium}
\email{tranchida.philippe@gmail.com}


\setcounter{tocdepth}{1}

\date{\today}

\begin{abstract}
Let $\mathcal{G}$ be a finite connected simple graph with at least two vertices. We construct a finite regular hypertope $\Gamma$ whose diagram is the graph $\mathcal{G}$ with every edge labelled by $4$, and for which
$\Cor(\Gamma)/\Aut(\Gamma) \cong \Aut(\mathcal{G})$. This shows that any possible group of diagram symmetries can be realized by the correlations of a hypertope.
The construction uses a group generated by involutions, with nilpotency class two, exponent four and commutator relations dictated by the defining graph.
\end{abstract}

\maketitle
\section{Introduction}
An incidence geometry is a set of elements, each of which is assigned a type, together with an incidence relation. They permit to encode a wide variety of geometrical structures, such as concrete or abstract polytopes, affine and projective spaces, Tits' buildings and maps or hypermaps on surfaces, using a unified language. 

An automorphism of an incidence geometry is an incidence preserving symmetry of its elements that also preserves the type of every element, while a correlation is allowed to permute the types. If $I$ is the type set of a geometry $\Gamma$, the action on $I$ gives a homomorphism
\[
\theta\colon \Cor(\Gamma)\longrightarrow \SSym(I)
\]
with kernel $\Aut(\Gamma)$. Thus $\Aut(\Gamma)$ is a normal subgroup of $\Cor(\Gamma)$ and  $\Cor(\Gamma)/\Aut(\Gamma)$ is the group of permutations of types that are induced by correlations.
This distinction is also used by Leemans, Stokes and Tranchida in \cite{leemans2025groups}, where incidence geometries are used to represent inner and outer automorphisms of groups. Note that, in \cite{leemans2025groups}, the correlation and automorphism groups of the geometry are prescribed. Here we are only interested in the quotient group.

We focus on a specific instance of incidence geometries, called regular hypertopes. Hypertopes are thin, residually connected incidence geometries, and a hypertope $\Gamma$ is regular when its type-preserving automorphism group $\Aut(\Gamma)$ acts flag-transitively, achieving the highest possible degree of symmetry for a hypertope. Hypertopes were introduced by Fernandes, Leemans and Weiss in \cite{hypertopes} as a generalization of abstract polytopes. Regular polytopes and hypertopes can be constructed directly from groups. Polytopes are built from string $C$-groups, whereas hypertopes allow diagrams that are not strings and are built from $C$-groups. Since the diagram of a polytope is always linear, the only possible non trivial symmetry of its type set induced by correlations is a duality, exchanging the role of vertices and facets. On the other hand, any labelled graph can be the diagram of a hypertope, so that the symmetries of its type set can in theory be any group. The goal of this article is to show that any finite group $H$ can indeed be achieved as the correlation-induced group of symmetries of the types of a regular hypertope.

When $H \cong C_2$, the cyclic group of order two, the question is answered positively by any self-dual polytope, for instance. When $H \cong C_3$, the question becomes the construction of geometries with trialities but no dualities. Leemans, Stokes and Tranchida \cite{TrialitySuzuki} constructed and infinite family of finite regular hypertopes of rank three for which the induced group on types is cyclic of order three. Further families of such geometries were constructed in \cite{delaby2026geometriesadmittingtrialitiessymmetric}. Thick incidence geometries with trialities but no dualities where constructed in~\cite{ALCO_2026__9_2_379_0}. Examples for $H \cong \SSym(3)$ are much easier to construct, for instance  by considering triples of tangent line to a conic in a finite projective plane, see \cite{tranchida2025triples}.

Our construction for a generic $H$ starts with a finite connected simple graph $\mathcal{G}$ with at least two vertices and the idea of the construction was inspired by the content of \cite{Group-vertex-edge}. We take one involution for each vertex, impose that the nilpotency class is equal to two, so that all commutators are central, and make generators commute when the corresponding vertices are not adjacent. The resulting group has a normal form with one binary coordinate for each vertex and one for each edge of the graph. We use these coordinates to verify the intersection property and flag-transitivity of its standard coset incidence system.
Theorems~\ref{thm:Main} and~\ref{thm:correlations} show that this gives a finite regular hypertope $\Gamma$ of rank $|V(\mathcal{G})|$, with diagram $\mathcal{G}$ having all edges labelled by $4$, and with
\[
\Cor(\Gamma)/\Aut(\Gamma)\cong\Aut(\mathcal{G}).
\]
Every graph automorphism extends by permuting the generators. Conversely, the commutation relations between these generators recover the graph, so no other type permutations occur. Frucht's theorem \cite{Frucht1939} then gives the required realization of every finite group $H$. 

    \begin{thm}\label{maintheo}
        For any finite group $H$, there exists a regular hypertope $\Gamma$ such that $\Cor(\Gamma)/\Aut(\Gamma) \cong H$.
    \end{thm}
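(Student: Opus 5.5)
The plan is to deduce Theorem~\ref{maintheo} from Theorems~\ref{thm:Main} and~\ref{thm:correlations}, using Frucht's theorem~\cite{Frucht1939} to realize $H$ as $\Aut(\mathcal{G})$. The only work beyond quoting these results is to make sure the graph supplied by Frucht satisfies the hypotheses of the construction: $\mathcal{G}$ must be finite, connected, simple, and have at least two vertices.

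First I will fix a finite group $H$. Frucht's theorem gives a finite simple graph $\mathcal{G}_0$ with $\Aut(\mathcal{G}_0)\cong H$, and its standard construction (a Cayley-type graph with gadgets attached to the edges) is connected. If a particular version of Frucht's theorem only yields a graph that may be disconnected, I will pass to the complement. Since $\Aut(\overline{\mathcal{G}_0})=\Aut(\mathcal{G}_0)$ and at least one of a graph and its complement is connected, this gives a connected simple graph with the same automorphism group.

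Next I will rule out the degenerate case of fewer than two vertices. If $H$ is trivial, I will bypass Frucht entirely and take $\mathcal{G}$ to be an asymmetric connected graph. The smallest such graphs have six vertices; concretely, take a path $v_1v_2v_3v_4v_5$ together with a pendant vertex attached to $v_2$. Any automorphism fixes the unique vertex $v_2$ of degree $3$, hence permutes the three leaf-ended branches at $v_2$, of lengths $1,1,3$. It therefore fixes the long branch pointwise, and its third neighbour $v_3$ together with the rest of the path. The two length-one branches would seem to allow swapping $v_1$ and the pendant vertex, so I will modify the example: use a tree with branches of lengths $1,2,3$ at a degree-$3$ vertex. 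This is the classical asymmetric tree on seven vertices, and since its branch lengths are distinct, every automorphism fixes everything. If $H$ is nontrivial, then $\mathcal{G}$ has a nontrivial automorphism, so it has at least two vertices automatically.

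Finally, with such a $\mathcal{G}$ in hand, Theorem~\ref{thm:Main} gives a finite regular hypertope $\Gamma$ of rank $|V(\mathcal{G})|$ with diagram $\mathcal{G}$ and all edges labelled $4$. Theorem~\ref{thm:correlations} then gives $\Cor(\Gamma)/\Aut(\Gamma)\cong\Aut(\mathcal{G})\cong H$, which is the claim.

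I expect no genuine obstacle here. The only delicate point is bookkeeping: checking that the graph supplied by Frucht (or the replacement used for trivial $H$) meets the connectivity and "at least two vertices" hypotheses of Theorems~\ref{thm:Main} and~\ref{thm:correlations}. The complement trick and the explicit asymmetric tree handle this.
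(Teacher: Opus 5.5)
Your proposal is correct and follows the paper's proof: apply Frucht's theorem to obtain a finite connected simple graph $\mathcal{G}$ with $\Aut(\mathcal{G})\cong H$, then invoke Theorems~\ref{thm:Main} and~\ref{thm:correlations}. Your extra bookkeeping is sound and checks hypotheses the paper leaves implicit: the complement trick handles connectivity, and for trivial $H$, where Frucht's construction could degenerate to a single vertex, the seven-vertex asymmetric tree with branches of lengths $1,2,3$ supplies at least two vertices.
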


\section{Background}
All sets and graphs in this paper are finite. A simple graph means an undirected graph with no loops nor multiple edges. We set $[x,y]=x^{-1}y^{-1}xy$, and write $G'=[G,G]$ for the commutator subgroup of a group $G$. We now give a quick background on the necessary terminology for incidence systems, geometries, and regular hypertopes.

\subsection{Incidence systems}
We recall the terminology needed below; a general reference is \cite[Chapter~1]{buekenhout2013diagram}.
An \emph{incidence system} is a quadruple $\Gamma=(X,I,*,t)$, where $X$ is a set of elements, $I$ is a set of types, $t\colon X\to I$ is surjective, and $*$ is a reflexive symmetric relation on $X$. Moreover, We require that two distinct incident elements have different types. The \emph{rank} of $\Gamma$ is $|I|$.

A \emph{flag} is a set of pairwise incident elements. Its type is the set of types of its elements, and a \emph{chamber} is a flag of type $I$. An incidence system is an \emph{incidence geometry} if every flag is contained in a chamber. The \emph{residue} of a flag $F$ consists of the elements of types outside $t(F)$ that are incident with every element of $F$, with the induced incidence relation and type map. Its rank is $|I\setminus t(F)|$.

The \emph{incidence graph} has vertex set $X$, with an edge between distinct incident elements. A geometry is \emph{residually connected} if every residue of rank at least two has a connected incidence graph. The empty flag is included, so this condition also applies to the whole geometry when its rank is at least two. A geometry is \emph{thin} if each residue of rank one contains exactly two elements.

A \emph{correlation} of $\Gamma$ is a bijection $\alpha\colon X\to X$ preserving incidence in both directions, for which there is a permutation $\sigma$ of $I$ satisfying $t(\alpha(x))=\sigma(t(x))$ for every $x\in X$. It is an \emph{automorphism} if $\sigma$ is the identity. The group of all correlations is denoted by $\Cor(\Gamma)$ and the group of all automorphisms by $\Aut(\Gamma)$. To each correlation $\alpha$ we can assign its permutation $\sigma$ of the type set $I$. Doing so yields a homomorphism $\theta \colon \Cor(\Gamma) \to \SSym(I): \alpha \mapsto \sigma$ with kernel $\Aut(\Gamma)$. Consequently,
\[
\Cor(\Gamma)/\Aut(\Gamma)\cong\theta(\Cor(\Gamma))
\leq\SSym(I).
\]
A group of automorphisms is \emph{flag-transitive} if it is transitive on the flags of each fixed type. For a geometry, this is equivalent to transitivity on chambers.

\subsection{Regular hypertopes}
A \emph{hypertope} is a thin, residually connected incidence geometry. It is \emph{regular} if its automorphism group is flag-transitive \cite{hypertopes}. In a thin geometry, each chamber $C$ has a unique distinct chamber $C^i$ differing from $C$ only in its element of type $i$. These chambers are called $i$-adjacent. The \emph{chamber graph} has the chambers as vertices and an edge labelled $i$ between $i$-adjacent chambers.

The chamber graph of a residually connected geometry is connected; see \cite[Chapters~1 and~3]{buekenhout2013diagram}. In a thin geometry with connected chamber graph, an automorphism fixing a chamber fixes each adjacent chamber, and hence all chambers. It is therefore the identity. In particular, the automorphism group of a regular hypertope acts simply transitively on chambers.

A \emph{$C$-group} is a pair $(G,S)$, where $S=\{\rho_i\mid i\in I\}$ is a generating set of involutions satisfying the \emph{intersection property}
\[
G^J\cap G^K=G^{J\cap K}\qquad(J,K\subseteq I),
\qquad
G^J:=\langle\rho_j\mid j\in J\rangle.
\]
Here $G^\emptyset=\{1\}$. If $\Gamma$ is a regular hypertope and $C$ is a chamber, the unique automorphisms $\rho_i$ sending $C$ to $C^i$ generate $G=\Aut(\Gamma)$ and form a $C$-group \cite[Theorem~4.1]{hypertopes}. We call them the \emph{distinguished generators} with respect to $C$.
Conversely, given a group $G$ generated by involutions $S=\{\rho_i\mid i\in I\}$, set
\[
G_i=\langle\rho_j\mid j\in I\setminus\{i\}\rangle.
\]
The \emph{standard coset incidence system} $\Gamma(G,(G_i)_{i\in I})$ has as its elements of type $i$ the right cosets $G_i g$, and two cosets are incident when their intersection is nonempty. The group $G$ acts by right multiplication, and $\{G_i\mid i\in I\}$ is the standard chamber. This construction is due to Tits~\cite{Tits1957} (see also \cite[Section~3]{hypertopes}).

For $J\subseteq I$, write
\[
G_J:=\bigcap_{j\in J}G_j,\qquad G_\emptyset:=G.
\]
The subgroups $G_i$ are called the \emph{maximal parabolic subgroups}. When $(G,S)$ is a $C$-group, its intersection property gives $G_J=G^{I\setminus J}$. The notations $G_J$ and $G^J$ thus use complementary sets of indices.

The following criterion is \cite[Theorem~1.8.10]{buekenhout2013diagram}, with the subgroup products reversed by taking inverses.
\begin{prop}\label{prop:FT}
The group $G$ is flag-transitive on $\Gamma(G,(G_i)_{i\in I})$ if and only if
\[
G_iG_J=\bigcap_{j\in J}G_iG_j
\qquad\text{for all }J\subseteq I\text{ and }i\in I\setminus J.
\]
If this condition holds, the coset incidence system is a geometry.
\end{prop}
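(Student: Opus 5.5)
We prove the criterion by induction on the size of flags, reducing transitivity on flags of type $J\cup\{i\}$ to transitivity of the stabilizer $G_J$ of the standard flag of type $J$ on its extensions of type $i$. Two basic facts are needed first. The action of $G$ by right multiplication preserves types and incidence, since $G_ig\cap G_jg'\neq\emptyset$ implies $G_igh\cap G_jg'h\neq\emptyset$. The stabilizer of the coset $G_j$ is exactly $G_j$. For $J\subseteq I$ let $F_J=\{G_j\mid j\in J\}$ be the standard flag of type $J$; it is a flag because every $G_j$ contains $1$. Its stabilizer is $\bigcap_{j\in J}G_j=G_J$, since an element fixing $F_J$ must fix each $G_j$ (types are preserved). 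Also, two distinct cosets of the same subgroup are disjoint, so a flag contains at most one element of each type.

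The key computation is the following. A coset $G_ig$ with $i\notin J$ is incident with every element of $F_J$ if and only if $G_ig\cap G_j\neq\emptyset$ for all $j\in J$. This holds if and only if $g\in G_iG_j$ for all $j\in J$, that is, $g\in\bigcap_{j\in J}G_iG_j$. Thus the extensions of $F_J$ by an element of type $i$ are the cosets $G_ig$ with $g\in\bigcap_{j\in J}G_iG_j$. The standard extension is $G_i$ itself, and its $G_J$-orbit is $\{G_ih\mid h\in G_J\}$, whose union is $G_iG_J$. Since $G_iG_J\subseteq\bigcap_{j\in J}G_iG_j$ always holds, we get:
\[
G_J\text{ is transitive on the type-}i\text{ extensions of }F_J
\iff G_iG_J=\bigcap_{j\in J}G_iG_j .
\]

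For sufficiency, assume the condition and show by induction on $|K|$ that every flag of type $K$ is a $G$-image of $F_K$. For $|K|\le1$ this holds because $G$ is transitive on each type. For $K=J\cup\{i\}$ with $i\notin J$, take a flag $F$ of type $K$. By induction some $g\in G$ maps its subflag of type $J$ to $F_J$. Then $Fg$ is $F_J$ together with an extension of type $i$, and the displayed equivalence gives $h\in G_J$ with $Fgh=F_K$.

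For necessity, assume $G$ is flag-transitive. Given an extension $F_J\cup\{G_ig\}$, some element of $G$ maps it to $F_K$. That element fixes $F_J$, since it preserves types and $F_J$ is the only subflag of type $J$, so it lies in $G_J$. Hence $G_J$ is transitive on the extensions and the equality holds.

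For the last assertion, every flag of type $K$ is an image of $F_K$, hence lies in the image of the standard chamber $F_I$. So every flag lies in a chamber, and $\Gamma(G,(G_i)_{i\in I})$ is a geometry.

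The only delicate point is bookkeeping: using right cosets with right multiplication, so that incidence of $G_ig$ with $G_j$ becomes $g\in G_iG_j$ rather than $g\in G_jG_i$. This is the "products reversed by taking inverses" remark before the statement.
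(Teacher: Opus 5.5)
Your argument is correct. The paper gives no proof of its own here. It quotes Buekenhout--Cohen, Theorem~1.8.10, and remarks only that the subgroup products there are reversed by taking inverses.

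You instead prove the statement from scratch, working directly with right cosets and right multiplication, so no translation step is needed. Your route is the standard one, and essentially the argument behind the cited theorem:
\begin{itemize}
\item the type-$i$ elements of the residue of the standard flag $F_J$ are the cosets $G_ig$ with $g\in\bigcap_{j\in J}G_iG_j$;
\item the $G_J$-orbit of $G_i$ covers exactly $G_iG_J$, so the condition says precisely that the flag stabilizer $G_J$ is transitive on these elements;
\item induction on $|K|$ then gives transitivity on flags of each type.
\end{itemize}
The one step you leave implicit is harmless: $\bigcap_{j\in J}G_iG_j$ is left $G_i$-invariant, hence a union of right cosets of $G_i$. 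This is what lets you pass from ``the orbit equals the set of extensions'' to equality of the two subsets of $G$.

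The citation buys brevity. Your version is self-contained and makes the stabilizer-on-residue reformulation explicit. It also yields the final assertion at no extra cost, since every flag lies in a translate of the standard chamber $F_I$.
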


A $C$-group need not satisfy this additional condition but, when it does, it corresponds to a regular hypertope \cite[Theorem~4.6]{hypertopes}.
\begin{prop}\label{prop:hypertope}
If $(G,S)$ is a $C$-group and $G$ is flag-transitive on its standard coset incidence system $\Gamma$, then $\Gamma$ is a regular hypertope. Its automorphism group is isomorphic to $G$.
\end{prop}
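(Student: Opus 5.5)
Since this proposition is \cite[Theorem~4.6]{hypertopes}, we would prove it by checking the defining properties of a regular hypertope one at a time, using flag-transitivity to reduce every local question to flags contained in the standard chamber $\{G_i\mid i\in I\}$. Proposition~\ref{prop:FT} already tells us that $\Gamma$ is a geometry and that $G$ is flag-transitive, so $\Gamma$ is regular provided it is a hypertope. It therefore remains to prove thinness, residual connectedness, and $\Aut(\Gamma)\cong G$. Throughout, the key tool is the intersection property in the form $G_J=G^{I\setminus J}$.

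\emph{Step 1 (residues as coset geometries).} Let $F_J=\{G_j\mid j\in J\}$ be the standard flag of type $J$. We claim that its residue is the coset incidence system of $G_J$ with respect to the subgroups $G_{J\cup\{i\}}$, $i\in I\setminus J$. An element $G_ig$ lies in the residue exactly when it is incident with every $G_j$, $j\in J$. By flag-transitivity, the stabiliser $G_J$ of $F_J$ is transitive on the flags of type $J\cup\{i\}$ that contain $F_J$. Hence every such element has the form $G_ih$ with $h\in G_J$, and $G_ih\mapsto G_{J\cup\{i\}}h$ is a well-defined bijection. We then check that incidence is preserved, using flag-transitivity once more: two cosets in the residue that meet lie in a common flag together with $F_J$. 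This is the step we expect to be the main obstacle, since it is exactly where the product condition of Proposition~\ref{prop:FT} is needed.

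\emph{Step 2 (thinness).} Take $J=I\setminus\{i\}$. By Step~1 the residue consists of the cosets of $G_I=G^{\emptyset}=\{1\}$ in $G_{I\setminus\{i\}}=G^{\{i\}}=\{1,\rho_i\}$. These are the two elements $G_i$ and $G_i\rho_i$. They are distinct because $\rho_i\notin G_i$: the intersection property gives $G^{\{i\}}\cap G^{I\setminus\{i\}}=G^{\emptyset}=1$.

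\emph{Step 3 (residual connectedness).} Fix $J$ with $|I\setminus J|\ge 2$. A coset incidence system $\Gamma(H,(H_k)_k)$ is connected if and only if $H=\langle H_k\rangle$. Indeed, starting from the standard chamber, one reaches $H_kg$ from $H_k$ by repeatedly multiplying by elements of the various $H_k$. By Step~1 this applies to the residue of $F_J$ with $H=G^{I\setminus J}$ and $H_k=G^{I\setminus(J\cup\{k\})}$ for $k\in I\setminus J$. Each $\rho_k$ with $k\in I\setminus J$ lies in some $H_{k'}$ with $k'\neq k$, which exists because $|I\setminus J|\ge2$. Hence the $H_k$ generate $H$, and the residue is connected. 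By flag-transitivity, every residue of rank at least two is isomorphic to one of these, so $\Gamma$ is residually connected.

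\emph{Step 4 ($\Aut(\Gamma)\cong G$).} The kernel of the action of $G$ on $\Gamma$ is contained in $\bigcap_i G_i=G_I=1$, so $G$ embeds in $\Aut(\Gamma)$. Since $G$ is transitive on chambers and the stabiliser of the standard chamber is $G_I=1$, the group $G$ acts regularly on chambers. On the other hand, $\Gamma$ is now a thin residually connected geometry, so by the remark in the background section $\Aut(\Gamma)$ acts simply transitively on chambers. Two groups acting regularly on the same set, one contained in the other, must be equal. Hence $\Aut(\Gamma)=G$.
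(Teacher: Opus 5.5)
The paper does not prove this proposition; it only quotes \cite[Theorem~4.6]{hypertopes}. Your argument is correct and gives a self-contained version of the standard proof behind that citation.

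The outline is sound:
\begin{enumerate}
\item You identify the residue of the standard flag $F_J$ with the coset system of $G_J$ relative to the subgroups $G_{J\cup\{i\}}$.
\item You obtain thinness from $G_{I\setminus\{i\}}=\langle\rho_i\rangle$ and $G_I=1$.
\item You obtain residual connectedness from $G_J=\langle G_{J\cup\{k\}}\mid k\in I\setminus J\rangle$.
\item You conclude $\Aut(\Gamma)=G$ because a chamber-transitive subgroup of a group acting freely on chambers is the whole group. This step needs no finiteness.
\end{enumerate}

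The citation packages these steps into general facts about flag-transitive coset geometries. Your route makes two things explicit. First, the intersection property is used only through $G_J=G^{I\setminus J}$. Second, the product condition of Proposition~\ref{prop:FT} is never needed, because you use flag-transitivity directly.

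Two points should be corrected or filled in.
\begin{itemize}
\item Flag-transitivity is a hypothesis of the statement, not something Proposition~\ref{prop:FT} supplies. That $\Gamma$ is a geometry then follows at once, since every flag $F_Jg$ lies in the chamber $\{G_ig\mid i\in I\}$.
\item The incidence check in Step~1 should be written out. Suppose $G_ih$ and $G_kh'$ in the residue meet. Flag-transitivity gives $g\in G$ mapping $F_{J\cup\{i,k\}}$ to $F_J\cup\{G_ih,G_kh'\}$. Such a $g$ fixes $F_J$, so $g\in G_J$. Then $gh^{-1}\in G_i\cap G_J=G_{J\cup\{i\}}$, and similarly for $k$. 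Hence $g\in G_{J\cup\{i\}}h\cap G_{J\cup\{k\}}h'$.
\end{itemize}
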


The \emph{Buekenhout diagram}, or \emph{Coxeter diagram}, of a regular hypertope has vertex set $I$. Vertices $i$ and $j$ are joined by an edge labelled
\[
m_{i,j}=o(\rho_i\rho_j)
\]
when $m_{i,j}>2$; an edge is omitted when $m_{i,j}=2$. This agrees with the diagram defined by the rank-two residues, where each residue of type $\{i,j\}$ is an $m_{i,j}$-gon \cite[Section~4]{hypertopes}. 

Finally, correlations can be read from automorphisms of the group that permute its distinguished generators \cite[Proposition~3.15]{TrialitySuzuki}. Indeed, after composing a correlation with a type-preserving automorphism, we may assume that it fixes the standard chamber. Conjugation then sends $\rho_i$ to $\rho_{\sigma(i)}$, where $\sigma$ is its type permutation. Conversely, a group automorphism $\beta$ with $\beta(\rho_i)=\rho_{\sigma(i)}$ sends $G_i$ to $G_{\sigma(i)}$ and defines a correlation by
\[
G_i g\longmapsto G_{\sigma(i)}\beta(g).
\]
Thus, setting
$\Aut(G,S):=\{\beta\in\Aut(G)\mid\beta(S)=S\}$,
we obtain
\[
\Cor(\Gamma)/\Aut(\Gamma)\cong\Aut(G,S).
\]

\section{Regular Hypertopes with Prescribed Diagram Symmetries}

Let $\mathcal{G}$ be any given finite connected simple graph with vertex set $V =V(\mathcal{G}) $ and edge set $E = E(\mathcal{G})$. Assume that $\mathcal{G}$ has at least two vertices. We construct a group $G$ associated to $\mathcal{G}$ in the following way.

Label the vertices of $\mathcal{G}$ by integers going from $1$ to $n = |V|$. The group $G$ is set to be generated by $n$ involutions $\rho_1,\rho_2, \ldots,\rho_n$, where the involution $\rho_i$ corresponds to the $i$'th vertex of $\mathcal{G}$. Moreover, $G$ is required to be a nilpotent group of nilpotency class two\footnote{A group of nilpotency class two is a group whose commutator subgroup is contained in its centre.}. Finally, whenever there is no edge between two vertices $i$ and $j$, we impose that $\rho_i$ and $\rho_j$ commute.

Since $G$ is of nilpotency class two, it means that the commutator subgroup $[G,G]$ is central in $G$. In other words, commutators commute with every elements of $G$. Moreover, since $G$ is generated by involutions, all commutators also have order two. Indeed, we have $[\rho_i,\rho_j]^2 = [\rho_i,\rho_j]\rho_i\rho_j\rho_i\rho_j = \rho_i[\rho_i,\rho_j]\rho_j\rho_i\rho_j = e$ and the group $[G,G]$ is abelian, generated by the set of commutators $\{[\rho_i,\rho_j] \mid i < j \in I\}$, where we set once for all $I = \{1,2,\ldots,n\}$.

In other words, the group $G$ is defined by the following presentation, where we set $I = \{1,2,\ldots,n\}$.
\[
G :=
\left\langle
\rho_i \ (i\in I)
\ \middle|\
\begin{array}{ll}
\rho_i^2 = 1
& \text{for all } i\in I,\\[2mm]
[[\rho_i,\rho_j],\rho_k]=1
& \text{for all } i,j,k\in I,\\[2mm]
[\rho_i,\rho_j]=1
& \text{whenever } \{i,j\}\notin E
\end{array}
\right\rangle .
\]

Set $c_{i,j} =[\rho_i, \rho_j]$. Then, the previous discussion shows that any element $g \in G$ can be written as 

\begin{equation} \label{eq:normalForm}
g =\rho_1^{\epsilon_1}\rho_2^{\epsilon_2} \ldots \rho_n^{\epsilon_n} \prod_{\substack{\{i,j\}\in E\\ i<j}} c_{i,j}^{\epsilon_{i,j}}
\end{equation}

with $\epsilon_i, \epsilon_{i,j} \in \{0,1\}$. This shows that $G$ also has exponent four. Indeed, from Equation~(\ref{eq:normalForm}) and the above discussion, we get that $g^2 \in [G,G]$ for any $g \in G$, so that $g^4 = e$. Moreover, it is clear from the normal form (\ref{eq:normalForm}) that $|G| = 2 ^{(|V|+|E|)}$.

Equation~(\ref{eq:normalForm}) actually gives a strong normal form for elements of $G$, meaning that any $g \in G$ is uniquely expressed in such a way. The exponents $\epsilon_i$ are seen to be uniquely determined by using the abelianization homomorphism $\pi \colon G \to G^{ab} = G/[G,G] \cong C_2^n$. Similarly, the exponents $\epsilon_{i,j}$ are verified to be unique by looking at the images of an element $g$ under the homomorphisms $\pi_{i,j} \colon G \to D_4$, where $D_4 = \langle a,b \rangle$ is the dihedral group with 8 elements generated by two involutions $a$ and $b$, and $\pi_{i,j}$ sends $\rho_i$ to $a$, $\rho_j$ to $b$ and the other generators of $G$ to the identity.

\begin{thm}\label{thm:Main}
    Let $\mathcal{G}$ be a simple graph, $G$ be the class two nilpotent group associated to it and $\Gamma = (G,(G_i)_{i \in I})$ be the standard coset geometry associated to $G = \langle \rho_1, \rho_2, \ldots, \rho_n \rangle$, where $G_i$ is the subgroup of $G$ generated by all $\rho_j$'s except from $\rho_i$. Then, $\Gamma$ is a regular hypertope, whose diagram is the graph $\mathcal{G}$, where all edges are labelled by $4$.
\end{thm}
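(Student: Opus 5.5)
We will verify the hypotheses of Proposition~\ref{prop:hypertope}: that $(G,S)$ is a $C$-group and that $G$ acts flag-transitively on $\Gamma$. Everything rests on the normal form~(\ref{eq:normalForm}). The first step is to describe parabolic subgroups explicitly. For $J\subseteq I$, we claim that $G^J=\langle\rho_j\mid j\in J\rangle$ is exactly the set of elements whose normal form has $\epsilon_i=0$ for $i\notin J$ and $\epsilon_{i,j}=0$ unless both $i,j\in J$. The inclusion $\subseteq$ holds because the set of such elements is closed under multiplication: products are computed by moving generators past each other, which only creates commutators $c_{i,j}$ with $i,j\in J$, and these are central. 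The inclusion $\supseteq$ holds because each such normal-form word is visibly a word in $\{\rho_j\}_{j\in J}$. Equivalently, $G^J$ is the class-two group attached to the induced subgraph on $J$, embedded coordinatewise, and $|G^J|=2^{|J|+|E(J)|}$, where $E(J)$ denotes the edges with both ends in $J$.

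With this coordinate description, the intersection property is immediate. An element of $G^J\cap G^K$ has vertex coordinates supported in $J\cap K$ and edge coordinates supported in $E(J)\cap E(K)=E(J\cap K)$, so it lies in $G^{J\cap K}$. Uniqueness of the normal form is essential here. The diagram is computed in the same way. If $\{i,j\}\notin E$, then $\rho_i\rho_j$ has order $2$. If $\{i,j\}\in E$, then $(\rho_i\rho_j)^2=c_{i,j}\neq1$ by uniqueness of the normal form (or via $\pi_{i,j}$ onto $D_4$, where $ab$ has order $4$). Since $G$ has exponent $4$, the order is exactly $4$. The $\rho_i$ are distinct nontrivial involutions by the abelianization, so $(G,S)$ is a $C$-group with diagram $\mathcal{G}$ and all labels equal to $4$.

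The main obstacle is flag-transitivity, via Proposition~\ref{prop:FT}. We must show $G_iG_J=\bigcap_{j\in J}G_iG_j$ for $i\notin J$, where $G_J=G^{I\setminus J}$; the inclusion $\subseteq$ is trivial. The plan is to characterize $G^AG^B$ in coordinates for arbitrary $A,B\subseteq I$. Multiplying $x\in G^A$ by $y\in G^B$ and rewriting in normal form, the vertex coordinates of $xy$ range over everything supported in $A\cup B$. The edge coordinates can be chosen freely on $E(A)\cup E(B)$, since the central commutators in $G^A$ and $G^B$ can be adjusted independently. Reordering produces additional edge terms $c_{k,l}$ with $k\in B\setminus A$, $l\in A\setminus B$ (and $k,l$ adjacent), depending bilinearly on the vertex coordinates. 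We therefore expect the criterion $g\in G^AG^B$ to read as follows: $\mathrm{supp}(\epsilon)\subseteq A\cup B$, and for each edge $\{k,l\}$ with one endpoint in $A\setminus B$ and the other in $B\setminus A$, the coordinate $\epsilon_{k,l}$ equals a fixed bilinear expression in $\epsilon_k,\epsilon_l$ (namely $\epsilon_k\epsilon_l$ or $0$, depending on the ordering convention). Edges not of this form, and not inside $A\cup B$, must have $\epsilon_{k,l}=0$.

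We apply this with $A=I\setminus\{i\}$ and $B=I\setminus\{j\}$, so $A\cup B=I$. The only cross edge is $\{i,j\}$ when $i\sim j$; hence $G_iG_j$ is the set of $g$ whose $c_{i,j}$-coordinate, if $i\sim j$, equals the prescribed function of $(\epsilon_i,\epsilon_j)$, with no other constraint. Intersecting over $j\in J$ imposes exactly these constraints for every $j\in J$ adjacent to $i$. With $A=I\setminus\{i\}$ and $B=I\setminus J$, the cross edges are precisely the edges $\{i,j\}$ with $j\in J$. Every other edge lies in $E(A)\cup E(B)$: if both endpoints lie in $J$ the edge is in $E(A)$, and if neither lies in $J$ it is in $E(B)$. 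Any remaining edge $\{j,k\}$ with $j\in J$ and $k\notin J\cup\{i\}$ is also in $E(A)$. So the constraints on $G_iG_J$ are the same as those on $\bigcap_{j\in J}G_iG_j$, provided the bilinear term for the edge $\{i,j\}$ is the same in both computations. This is the point that needs care: we will fix the convention by always writing $x\in G^A$ in normal form first, so that the correction term for the edge $\{i,j\}$ depends only on $\epsilon_i,\epsilon_j$ and not on $B$. This gives equality, hence flag-transitivity by Proposition~\ref{prop:FT}, and Proposition~\ref{prop:hypertope} then completes the proof.
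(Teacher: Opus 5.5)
Your proposal is correct and follows essentially the same route as the paper: the normal-form coordinate description of $G^J$ gives the intersection property, and flag-transitivity comes from describing both $G_iG_j$ and $G_iG_J$ by the same constraints on the coordinates $\epsilon_{i,j}$ for $j\in J$ adjacent to $i$. The differences are cosmetic. The paper reorders the normal form so that $i$ is last, which makes the forced cross-edge value simply $\epsilon_{i,j}=0$, whereas you keep a fixed ordering with forced value $\epsilon_i\epsilon_j$ or $0$. You also verify the labels $o(\rho_i\rho_j)\in\{2,4\}$ explicitly, which the paper leaves implicit.
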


\begin{proof}
    We first show that $G$ satisfies the intersection property, and therefore that $(G,S)$, with $S = \{\rho_1,\rho_2, \ldots, \rho_n\}$ is a $C$-group. For any two subsets $J,K \subset I = \{1,2,\ldots, n\}$, we need to show that 

    \begin{equation}\label{eq:intersectionProp}
        G^J \cap G^K = G^{J \cap K}
    \end{equation} 

    where $G^L := \langle \rho_i \mid i \in L\rangle$ for any subset $L \subset I$. By the normal form given in Equation~(\ref{eq:normalForm}), it is immediate that elements of $G^J$ consists of those whose normal form satisfies $\epsilon_i = 0$ for $i \notin J$ and $\epsilon_{i,j} = 0$ unless $\{i,j\} \subset J$. Since this holds for $G^K$ and $G^{J \cap K}$ also, Equation~(\ref{eq:intersectionProp}) trivially holds.

    Now, we will show that $\Gamma$ is flag-transitive. We need to show that

    \begin{equation}\label{eq:FT}
        G_iG_J = \bigcap_{j \in J} G_iG_j
    \end{equation}
    for any $J \subset I$ and where $G_J := \bigcap_{j \in J} G_j$. Note that we can assume that $i \notin J$ since if $i \in J$, both side of Equation~(\ref{eq:normalForm}) are equal to $G_i$. We thus suppose that $i \notin J$ for the rest of the proof.
    We need to understand normal forms of elements in products $G_iG_j$. Elements of $G_i$ are precisely those whose normal form has $\epsilon_i = 0$ and $\epsilon_{ik} = 0$ for any $k \in I$. Since the commutators are central and since $\rho_i \in G_j$, we can drop the condition $\epsilon_i = 0$ for elements of $G_iG_j$. Moreover, as $c_{ik} \in G_j$ for $k \neq j$, we can also drop the conditions $\epsilon_{ik} = 0$, except for $e_{i,j} = 0$ when $\{i,j\} \in E$. In particular, if $\{i,j\} \notin E$, we have $G_iG_j = G$. Let us now suppose, without loss of generality, that $i$ is the last element of the ordering in the normal form. Indeed, in Equation~(\ref{eq:normalForm}), we ordered indices naturally by $1 < 2< \ldots <n$, but we could choose any other ordering and obtain another valid normal form. Using a normal form where $i$ is the last index, we can then see that if $\{i,j\} \in E$, the set $G_iG_j$ consists of all elements whose normal form satisfies $\epsilon_{i,j} = 0$. Note that the choice on the ordering of elements of $I$ matters for this description, because if we had needed to switch $\rho_i$ and $\rho_j$ an odd number of times to re-order the generators, we would have introduced a commutator $[\rho_i,\rho_j] = c_{i,j}$. Note that the exact same reasoning also shows that normal forms for elements in $G_iG_J$ are precisely those with $\epsilon_{i,j} = 0$ for any $\{i,j\}$ with $j \in J$ and $\{i,j\} \in E$.

    The inclusion $G_iG_J \subset \cap_{j \in J} G_iG_j$ always holds. It suffices thus to show the reverse inclusion. Let $g \in \cap_{j \in J}G_iG_j$. Thus $g \in G_iG_j$ for every $j \in J$. Therefore, according to the above discussion, the normal form for $g$ is characterized by $\epsilon_{i,j} = 0$ for every $j \in J$ such that $\{i,j\} \in E$. As explained above, any elements of this form is contained in $G_iG_J$, as desired.
\end{proof}

\begin{thm}\label{thm:correlations}
    Let $\mathcal{G}$ be a simple graph and let $\Gamma$ be the coset geometry associated to it constructed in Theorem \ref{thm:Main}. Then, $\Cor(\Gamma)/\Aut(\Gamma) \cong \Aut(\mathcal{G})$.
\end{thm}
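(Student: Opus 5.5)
By the discussion at the end of the Background section, $\Cor(\Gamma)/\Aut(\Gamma)\cong\Aut(G,S)$, where $S=\{\rho_1,\dots,\rho_n\}$. So I will show that restricting to $S$ gives an isomorphism $\Aut(G,S)\to\Aut(\mathcal{G})$, identifying $\rho_i$ with vertex $i$. First note that the $\rho_i$ are pairwise distinct. Their images in $G^{ab}\cong C_2^n$ are the distinct basis vectors, by the normal form (\ref{eq:normalForm}). Hence any $\beta\in\Aut(G,S)$ induces a permutation $\sigma$ of $I$ with $\beta(\rho_i)=\rho_{\sigma(i)}$. Since $S$ generates $G$, $\beta$ is determined by $\sigma$, so the map $\beta\mapsto\sigma$ is an injective homomorphism into $\SSym(I)$.

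To see that $\sigma$ is a graph automorphism, I use the fact that adjacency is detected by commutation. By the normal form, $\{i,j\}\in E$ if and only if $c_{i,j}\neq 1$, i.e. if and only if $\rho_i$ and $\rho_j$ do not commute. One direction is a defining relation. For the other, if $\{i,j\}\in E$ then $c_{i,j}$ is a nontrivial normal-form element. Alternatively, its image under $\pi_{i,j}\colon G\to D_4$ is $[a,b]\neq 1$. Equivalently, $o(\rho_i\rho_j)=4$ for edges and $2$ for non-edges, matching the diagram of Theorem~\ref{thm:Main}. An automorphism preserves commutation, so $\rho_{\sigma(i)}$ and $\rho_{\sigma(j)}$ commute exactly when $\rho_i$ and $\rho_j$ do. Therefore $\sigma\in\Aut(\mathcal{G})$.

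For surjectivity, take $\sigma\in\Aut(\mathcal{G})$ and define $\beta$ on generators by $\rho_i\mapsto\rho_{\sigma(i)}$. I will check that every defining relation of the presentation of $G$ maps to a relation.
\begin{itemize}
\item The relation $\rho_i^2=1$ maps to $\rho_{\sigma(i)}^2=1$.
\item The class-two relation $[[\rho_i,\rho_j],\rho_k]=1$ maps to $[[\rho_{\sigma(i)},\rho_{\sigma(j)}],\rho_{\sigma(k)}]=1$, which again holds since it is imposed for all triples.
\item If $\{i,j\}\notin E$, then $\{\sigma(i),\sigma(j)\}\notin E$ because $\sigma$ preserves non-adjacency. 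So $[\rho_{\sigma(i)},\rho_{\sigma(j)}]=1$ holds in $G$.
\end{itemize}
By von Dyck's theorem, $\beta$ extends to an endomorphism of $G$. Its image contains $S$, so it is surjective, and since $G$ is finite it is an automorphism. It permutes $S$ and induces $\sigma$. Hence $\Aut(G,S)\cong\Aut(\mathcal{G})$, and the theorem follows.

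The only delicate point is the relation-checking step: making sure the presentation is invariant under $\sigma$, and that the correspondence between $\Aut(G,S)$ and correlations is the one stated earlier. Both reduce to the facts that the relations are indexed symmetrically by $E$ and its complement, and that $\sigma$ preserves both sets.
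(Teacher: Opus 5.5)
Your proposal is correct and follows essentially the same route as the paper: graph automorphisms extend via the presentation to automorphisms of $G$ permuting $S$, and the converse comes from the identification $\Cor(\Gamma)/\Aut(\Gamma)\cong\Aut(G,S)$. You are in fact more complete than the paper's proof, which only invokes that identification for surjectivity. The step it leaves implicit is the one you prove via $\pi_{i,j}$: $\rho_i$ and $\rho_j$ commute exactly when $\{i,j\}\notin E$, so every $\beta\in\Aut(G,S)$ induces a graph automorphism.
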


\begin{proof}
    Let $\gamma \in \Aut(\mathcal{G})$ be an automorphism of $\mathcal{G}$. This induces a permutation of the generators of $G$ by setting $\alpha_\gamma (\rho_i) =  \rho_{\alpha(i)}$ for every $i \in I$. This map $\alpha_\gamma$ extends to an homomorphism of $G$ because it preserves all relations imposed on $G$. Since $\alpha_\gamma$ only permutes the generators, it is then an isomorphism of $G$. This isomorphism sends the maximal parabolic subgroup $G_i$ to $G_{\alpha_i}$ and thus induces a well defined correlation of $\Gamma$.

    So far, we have thus shown that there is an injective homomorphism $\alpha \colon \Aut(\mathcal{G}) \to \Cor(\Gamma)$. The correlation $\alpha_\gamma$ preserves types if and only $\gamma = e$, so that the image of $\alpha$ is actually contained in $\Cor(\Gamma)/\Aut(\Gamma) < \SSym(I)$. By \cite[Proposition 3.15]{TrialitySuzuki}, we know that any symmetry in $\Cor(\Gamma)/\Aut(\Gamma)$ must precisely come from an automorphism of $G$ permuting the generators, showing that $\alpha$ is surjective.\end{proof}

    \begin{proof}[Proof of Theorem~\ref{maintheo}]
    Frucht's theorem (\cite{Frucht1939}) states that the finite group $H$ can be realized as $\Aut(\mathcal{G})$ for some finite connected simple graph $\mathcal{G}$. Theorems \ref{thm:Main} and \ref{thm:correlations} then prove that there exists a regular hypertopes with $\Cor(\Gamma)/\Aut(\Gamma) \cong \Aut(\mathcal{G}) \cong H$.     
    \end{proof}

The hypertope constructed is this way for $n=3$ and graph $\mathcal{G}$ a complete graph with labels 4 on all edges has an automorphism group of order $2^{3+3}=64$. This is much smaller than the order of the groups constructed in~\cite{HFL2019}, namely 1024.
A search in the lists of hypermaps available on Marston Conder's website\footnote{\url{https://www.math.auckland.ac.nz/~conder/}} shows that the hypermaps with same diagram and smaller groups are not hypertopes. This is easy to see as a hypertope with such a diagram must have at least 40 chambers and the groups of hypermaps RPH2.4, RPH3.6 and RPH3.7 are smaller than that. Hence the hypertope we obtain in this case is the smallest possible one for the given diagram. We now prove that if $G$ is a 2-group that is the automorphism group of a regular hypertope of rank $n$ with diagram a complete graph with label 4 on each edge, then the order of $G$ is at least $2^{n+{n\choose 2}}$, making the hypertopes constructed here the smallest ones for 2-groups.

\begin{thm}
    Let $\mathcal{G}$ be a complete graph with $n$ vertices and with label four on every edge and assume that $G$ is a $2$-group that is the automorphism group of a regular hypertope of rank $n$ whose diagram is $\mathcal{G}$. Then $G$ must have at least $2^n$ elements of each type, and $|G|$ is at least $2^{n+{n\choose 2}}$.
\end{thm}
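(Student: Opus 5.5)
The plan is to prove both claims together by induction on $n$, reading ``at least $2^n$ elements of each type'' as $|G:G_i|\ge 2^n$ for every $i\in I$. Here $G_i$ is the maximal parabolic subgroup, so $|G:G_i|$ is the number of elements of type $i$. The base case is $n=1$, where $G\cong C_2$ and $|G:G_1|=2$; alternatively $n=2$, where $G$ is dihedral of order $8$ and each index equals $4$. For the induction I use the standard fact from \cite{hypertopes} that residues of a regular hypertope are regular hypertopes. Concretely, the residue of the element $G_j$ is the coset geometry of $G_j=G^{I\setminus\{j\}}$ with generators $\rho_k$ ($k\neq j$). By the intersection property, its maximal parabolic subgroups are $G_{\{j,k\}}=G^{I\setminus\{j,k\}}$. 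This residue is again a regular hypertope whose group is a $2$-group, and its diagram is the complete graph on $n-1$ vertices with all labels $4$. So the induction hypothesis applies to it.

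\emph{Step 1 (the order bound from the index bound).} Suppose $|G:G_j|\ge 2^n$ is known. By induction applied to the residue of type $j$, we have $|G_j|\ge 2^{(n-1)+\binom{n-1}{2}}$. Hence
\[
|G|=|G:G_j|\,|G_j|\ge 2^{n}\cdot 2^{(n-1)+\binom{n-1}{2}}=2^{n+\binom n2},
\]
using $\binom n2=\binom{n-1}{2}+(n-1)$.

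\emph{Step 2 (the index bound).} Fix $i\neq j$ and consider the double product $G_iG_j$. We have
\[
|G_iG_j|/|G_i| \;=\; |G_j|/|G_i\cap G_j| \;=\; |G_j:G_{\{i,j\}}|.
\]
The right-hand side is the number of type-$i$ elements in the residue of type $j$, so by induction it is at least $2^{n-1}$. Thus $|G:G_i|\ge 2^{n-1}$, and the remaining task is to gain one more factor of $2$. For this I will show $G_iG_j\neq G$. Since $|G:G_i|$ is a power of $2$ (as $G$ is a $2$-group) and strictly exceeds $|G_iG_j|/|G_i|\ge 2^{n-1}$, it follows that $|G:G_i|\ge 2^n$.

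\emph{Step 3 (main obstacle: $G_iG_j\neq G$).} In a coset geometry, $G_ig$ is incident with $G_j$ exactly when $g\in G_iG_j$. So $G_iG_j=G$ would mean every element of type $i$ is incident with the element $G_j$. I will contradict this using the rank-two residues. Take the residue of a flag of type $I\setminus\{i,j\}$ in the standard chamber. Since $m_{i,j}=4$, this residue is a $4$-gon, with two elements of each type, and the element $G_j$ lies in it. In a $4$-gon each element of type $j$ is incident with only one of the two elements of type $i$. So there is a type-$i$ element of the geometry that is not incident with $G_j$, which gives $G_iG_j\neq G$. Equivalently, in group terms, $G_iG_j\supseteq\langle\rho_i,\rho_j\rangle$ would force $\langle\rho_i,\rho_j\rangle=(G_i\cap\langle\rho_i,\rho_j\rangle)(G_j\cap\langle\rho_i,\rho_j\rangle)=\langle\rho_j\rangle\langle\rho_i\rangle$, a set of at most $4$ elements, whereas the dihedral group $\langle\rho_i,\rho_j\rangle$ has order $8$. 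I expect the only delicate point to be making this reduction to $\langle\rho_i,\rho_j\rangle$ rigorous, namely checking the identity $G_i\cap\langle\rho_i,\rho_j\rangle=\langle\rho_j\rangle$ (and symmetrically) via the intersection property. I also need $G_iG_j\cap\langle\rho_i,\rho_j\rangle=(G_i\cap\langle\rho_i,\rho_j\rangle)(G_j\cap\langle\rho_i,\rho_j\rangle)$, which I would obtain from flag-transitivity (Proposition~\ref{prop:FT}) applied with the flag of type $I\setminus\{i,j\}$; the geometric $4$-gon argument gives the same conclusion directly.
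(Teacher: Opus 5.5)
Your proof is correct and follows essentially the paper's route. The index bound comes from comparing the power of two $|G:G_i|$ with the number $|G_j:G_{\{i,j\}}|\ge 2^{n-1}$ of type-$i$ elements incident with a type-$j$ element, and equality is ruled out by the label $4$. Your step $|G|=|G:G_j|\,|G_j|$ is just the inductive form of the paper's chain of flag stabilisers.

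One slip to fix: a $4$-gon has four elements of each type, and $G_j$ is incident with exactly two of the four type-$i$ elements, not ``one of two''. This still yields a type-$i$ element not incident with $G_j$, and it agrees with your correct count $|\langle\rho_j\rangle\langle\rho_i\rangle|\le 4<8$.
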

\begin{proof}
    We prove the fact that $G$ has at least $2^n$ elements of each type by induction on the rank $n$.
    The cases where $n=1$ and $n=2$ are obvious. The case where $n=3$ was discussed already in the paragraph above.
    
    Suppose the theorem is true for $n$ and let us show that it must then be true for $n+1$.
    Take a hypertope $\Gamma$ of rank $n+1$ satisfying the hypotheses of the theorem and let $G$ be the automorphism group of $\Gamma$. Take an element $x$ of the hypertope $\Gamma$ and let $t(x)$ be its type. For any element $y$ of $\Gamma$ of type $t(y) \neq t(x)$, we can consider its residue $\Gamma_y$, which is a hypertope of rank $n$ satisfying the conditions of the theorem. By induction hypothesis, there are at least $2^n$ elements of type $t(x)$ in $\Gamma_y$. Note that the number of elements of type $t(x)$ in $\Gamma$ is given by $[G : G_x] =|G|/|G_x|$, which must be a power of two since $G$ is a $2$-group. Hence, if there are strictly more that $2^n$ elements of type $t(x)$ in $\Gamma$, there must be at least $2^{n+1}$ elements of type $t(x)$ in $\Gamma$. If there were exactly $2^n$ elements of type $t(x)$ in $\Gamma$, it would mean, by the preceding count, that they are all incident to $y$. Since this argument holds for any $y$ of type $t(y)$, it implies that all elements of type $t(x)$ are incident to all elements of type $t(y)$. Hence, there should be a two in the diagram on the edge joining $t(x)$ and $t(y)$, a contradiction.

    The fact that $|G| \geq 2^{n + {n \choose 2}}$ now follows from a simple argument. Let $c = \{x_1,x_2,\ldots,x_{n}\}$ be a chamber of $\Gamma$ and let $c_i = \{x_i, \ldots,x_n\}$ for $i=1, \ldots, n$ and set $c_{n+1} = \emptyset$. Then, by the first part of the theorem, $[G_{c_2}:G_{c_1}] \geq 2$, $[G_{c_3}:G_{c_2}] \geq 4$, and, more generally, $[G_{c_{i+1}}:G_{c_i}] \geq 2^{i}$ for any $i = 1,2,\ldots,n+1$, and where $G_\emptyset = G$. Therefore, we conclude that $|G| \geq 2^{1+2+\cdots +n} = 2^{n(n+1)/2} = 2^{n+{n\choose 2}}$.
\end{proof}
Observe that the previous theorem ensures that no proper quotient of the hypertopes we constructed with $\mathcal{G}$ a complete graph with label 4 on every edge can themselves be hypertopes.
\section{Acknowledgements}
The authors thank BIRS-CMO for the organisation of the workshop 26w5615-"Restricted Symmetry and Asymptotic Enumeration of Discrete Structures" during which this research was done.
Dimitri Leemans acknowledges financial support from the Actions de Recherche Concertées of the Fédération Wallonie-Bruxelles. Philippe Tranchida is a Postdoctoral Researcher of the Fonds de la Recherche Scientifique -- FNRS.

\bibliographystyle{siam}
\bibliography{refs}

@book{buekenhout2013diagram,
  title={Diagram geometry: related to classical groups and buildings},
  author={Buekenhout, Francis and Cohen, Arjeh M},
  volume={57},
  year={2013},
  publisher={Springer Science \& Business Media}
}

@incollection {Tits1957,
    AUTHOR = {Tits, J.},
     TITLE = {Sur les analogues alg\'{e}briques des groupes semi-simples
              complexes},
 BOOKTITLE = {Colloque d'alg\`ebre sup\'{e}rieure, tenu \`a {B}ruxelles du 19 au 22
              d\'{e}cembre 1956},
    SERIES = {Centre Belge de Recherches Math\'{e}matiques},
     PAGES = {261--289},
 PUBLISHER = {\'{E}tablissements Ceuterick, Louvain},
      YEAR = {1957},
   MRCLASS = {50.00 (22.00)},
  MRNUMBER = {108765},
MRREVIEWER = {H. Freudenthal},
}

@article {hypertopes,
    AUTHOR = {Fernandes, Maria Elisa and Leemans, Dimitri and Weiss, Asia
              Ivi\'{c}},
     TITLE = {Highly symmetric hypertopes},
   JOURNAL = {Aequationes Math.},
  FJOURNAL = {Aequationes Mathematicae},
    VOLUME = {90},
      YEAR = {2016},
    NUMBER = {5},
     PAGES = {1045--1067},
      ISSN = {0001-9054},
   MRCLASS = {51E24 (20F05 52B11)},
  MRNUMBER = {3547707},
MRREVIEWER = {Hendrik Van Maldeghem},
       DOI = {10.1007/s00010-016-0431-1},
       URL = {https://doi.org/10.1007/s00010-016-0431-1},
}

@article {TrialitySuzuki,
    AUTHOR = {Leemans, Dimitri and Stokes, Klara and Tranchida, Philippe},
     TITLE = {Flag transitive geometries with trialities and no dualities
              coming from {S}uzuki groups},
   JOURNAL = {J. Combin. Theory Ser. A},
  FJOURNAL = {Journal of Combinatorial Theory. Series A},
    VOLUME = {213},
      YEAR = {2025},
     PAGES = {Paper No. 106033},
      ISSN = {0097-3165,1096-0899},
   MRCLASS = {99-06},
  MRNUMBER = {4870227},
       DOI = {10.1016/j.jcta.2025.106033},
       URL = {https://doi.org/10.1016/j.jcta.2025.106033},
}

@article{Frucht1939,
  author  = {Frucht, Robert},
  title   = {Herstellung von {Graphen} mit vorgegebener abstrakter {Gruppe}},
  journal = {Compositio Mathematica},
  volume  = {6},
  year    = {1939},
  pages   = {239--250},
  url     = {https://www.numdam.org/item/CM_1939__6__239_0/}
}

@article{leemans2025groups,
  title={Groups represented by incidence geometries},
  author={Leemans, Dimitri and Stokes, Klara and Tranchida, Philippe},
  journal={arXiv preprint arXiv:2506.10464},
  year={2025}
}

@article{ALCO_2026__9_2_379_0,
     author = {Delaby, Remi and Leemans, Dimitri and Tranchida, Philippe},
     title = {Geometries with trialities arising from linear spaces},
     journal = {Algebraic Combinatorics},
     pages = {379--402},
     year = {2026},
     publisher = {The Combinatorics Consortium},
     volume = {9},
     number = {2},
     doi = {10.5802/alco.482},
     language = {en},
     url = {https://alco.centre-mersenne.org/articles/10.5802/alco.482/}
}

@misc{delaby2026geometriesadmittingtrialitiessymmetric,
      title={Geometries admitting trialities for the symmetric and alternating groups}, 
      author={Rémi Delaby},
      year={2026},
      eprint={2607.19230},
      howpublished={Preprint},
      archivePrefix={arXiv},
      primaryClass={math.CO},
      url={https://arxiv.org/abs/2607.19230}, 
}

@article{tranchida2025triples,
  title={Triples of involutions in ${PGL}(2, q)$ and their incidence geometries},
  author={Tranchida, Philippe},
  journal={Innovations in Incidence Geometry: Algebraic, Topological and Combinatorial},
  volume={22},
  number={1},
  pages={25--46},
  year={2025},
  publisher={Mathematical Sciences Publishers}
}

@article {HFL2019,
    AUTHOR = {Hou, Dong-Dong and Feng, Yan-Quan and Leemans, Dimitri},
     TITLE = {Existence of regular 3-hypertopes with {$2^n$} chambers},
   JOURNAL = {Discrete Math.},
  FJOURNAL = {Discrete Mathematics},
    VOLUME = {342},
      YEAR = {2019},
    NUMBER = {6},
     PAGES = {1857--1863},
      ISSN = {0012-365X,1872-681X},
   MRCLASS = {52B05},
  MRNUMBER = {3924410},
MRREVIEWER = {Egon\ Schulte},
       DOI = {10.1016/j.disc.2019.02.022},
       URL = {https://doi-org.ezproxy.ulb.ac.be/10.1016/j.disc.2019.02.022},
}

@article {Group-vertex-edge,
    AUTHOR = {Li, Yinan and Qiao, Youming},
     TITLE = {Group-theoretic generalisations of vertex and edge
              connectivities},
   JOURNAL = {Proc. Amer. Math. Soc.},
  FJOURNAL = {Proceedings of the American Mathematical Society},
    VOLUME = {148},
      YEAR = {2020},
    NUMBER = {11},
     PAGES = {4679--4693},
      ISSN = {0002-9939,1088-6826},
   MRCLASS = {20D15 (05C40 15A69)},
  MRNUMBER = {4143386},
MRREVIEWER = {Pablo\ Spiga},
       DOI = {10.1090/proc/15184},
       URL = {https://doi.org/10.1090/proc/15184},
}

\end{document}